\documentclass{article}
%\usepackage{maa-monthly}

%% Already loaded in maa-monthly:
\usepackage{amsmath, amsthm, amsfonts}

\usepackage{graphicx}

\theoremstyle{plain}
\newtheorem{theorem}{Theorem}
\newtheorem*{lemma}{Lemma}
\newtheorem*{liouville}{Liouville's theorem}
\newtheorem*{roth}{Roth's theorem}

\usepackage{hyperref}

%% CON ESTO ME CARGO LA REDEFINICION DE LETRAS \usepackage{times} DE maa-monthly
%\usepackage{lmodern}
%\usepackage{mathptmx, txfonts, txfontsb}
%\usepackage{txfonts, txfontsb}
%\usepackage{stix}

%\trimmarkstrue % Default
%\trimmarksfalse
%\final % <---- PUT IT FOR THE FINAL VERSION

% The MSC will be added later in the editorial process
%%-----------
%\title{A Couple of Transcendental Prime-Representing Constants%
%\thanks{MSC: Primary 11A41, Secondary 11J81.}}
%%-----------

%-----------
\title{A Couple of Transcendental\\
Prime-Representing Constants}
%-----------

%-----------
%\markright{Transcendental Prime-Representing Constants}
%-----------

%-----------
\author{Juan Luis Varona}
%-----------

\date{}

%---------------
\begin{document}
%---------------

%-----------
\maketitle
%-----------

\makeatletter
\def\blfootnote{\gdef\@thefnmark{}\@footnotetext}
\makeatother
\blfootnote{MSC: Primary 11A41, Secondary 11J81.}
\blfootnote{\textbf{This paper has been published as:}
J. L. Varona, 
A Couple of Transcendental Prime-Representing Constants,
\textit{Amer.\ Math.\ Monthly}\ \textbf{128} (2021), no.~10, 922--928, 
\url{https://doi.org/10.1080/00029890.2021.1977885}}

%-----------
\begin{abstract}
It is well known that the arithmetic nature of Mills' prime-representing constant is uncertain: we do not know if Mills' constant is a rational or irrational number. In the case of other prime-representing constants, irrationality can be proved, but it is not known whether these constants are algebraic or transcendental numbers.
By using Liouville or Roth's theorems about approximation by rationals, we find a couple of prime-representing constants that can be proved to be transcendental numbers.
\end{abstract}
%-----------

%-----------
\section{Introduction}
\label{sec:intro}
%-----------

A prime-representing constant is a constant that, by means of some process, generates infinitely many prime numbers, sometimes all the prime numbers.

The first and best known result about prime-representing constants was established by Mills in 1947 \cite{Mi}. He proved that there exists a real number $\theta$ such that $\lfloor \theta^{3^n} \rfloor$ is a prime for every positive integer $n$ (and where the so-called floor function $\lfloor \cdot \rfloor$ is used to denote the integer part). 
% floor function
The proof of this fact depends on a result of Ingham that is not elementary \cite{Ing}: if $p_n$ denotes the $n$th prime, then there is a constant $K$ such that $p_{n+1}-p_n < K p_n^{5/8}$ for $n= 1,2,\dots$. Assuming Ingham's theorem, finding Mills' constant $\theta$ is not difficult. 

There is no closed-form formula known for such $\theta$ and, actually, there are uncountably many possible values of $\theta$ with the prime-representing property. Hence, if one wants to define Mills' constant, a good approach is to look for the least $\theta$ such that $\lfloor \theta^{3^n} \rfloor$ is prime for $n= 1,2,\dots$. In this way, and by making some unproved but reasonable assumptions (in particular, the Riemann hypothesis), Mills' constant can be computed with precision. 
For instance, \cite{CaCh} shows a method that gives $\theta = 1.3063778838\dots$ to over 6850 decimal places.
But it is not known whether $\theta$ is a rational number. 

The primes generated using Mills' procedure grow very fast. If we denote $M_n = \lfloor \theta^{3^n} \rfloor$, one has
\[
  M_1 = 2,\quad M_2 = 11,\quad M_3 = 1361,\quad M_4 = 2\,521\,008\,887,
\] 
and $M_5$ has $29$ digits (OEIS sequence A051254, see~\cite{OEIS-Mi}).

In 1951, Wright \cite{Wr} found another prime-representing constant using a method that, instead of relying on Ingham's theorem, depends on Bertrand's postulate, a much more elementary result: $p_{n+1}-p_n < p_n$ (or, as usually stated, there is always a prime between $N$ and $2N$). Using this, Wright proves the existence of a constant $\alpha \in (1, 2)$ such that, if we recursively define $\alpha_0 = \alpha$ and $\alpha_{n+1} = 2^{\alpha_n}$, then $\lfloor \alpha_n \rfloor$ is prime for $n= 1,2,\dots$ 
(said more compactly, 
%$\big\lfloor 2^{2^{\cdot^{\cdot^{\cdot^{2^\alpha}}}}} \big\rfloor$ 
%$\big\lfloor 2^{2^{\dots^{2^\alpha}}} \big\rfloor$ 
%$\big\lfloor 2^{2^{\rotatebox{15}{\footnotesize\dots}^{2^\alpha}}} \big\rfloor$ 
$\big\lfloor 2^{2^{\rotatebox{15}{\footnotesize\dots}^{\raisebox{-0.7pt}{$\scriptscriptstyle 2^\alpha$}}}} \!\big\rfloor$ 
is a prime for any number of iterations of the exponential).
Again, the method to obtain $\alpha$ is not unique, and, in this case, for historical reasons (it is the example proposed by Wright), it is customary to take the biggest $\alpha$ with that property; this gives $\alpha = 1.928780\dots$. 
In any case, the primes generated using Wright's procedure are much larger than in Mills' procedure;
with the above mentioned $\alpha$, if we denote $W_n = \lfloor \alpha_n \rfloor$, the first values are 
$W_1 = 3$, $W_2 = 13$, and $W_3 = 16381$, while $W_4$ has about $5000$ digits (OEIS sequence A016104, \cite{OEIS-Wr}).
If we try to find the smallest $\alpha$, the corresponding primes are $W_1 = 3$, $W_2 = 11$, and $W_3 = 2053$, while $W_4$ has $618$ digits, but this sequence is not included in the~OEIS.

The proofs of Mills' and Wright's results and references to some similar ones can be found in~\cite{Du}.
And some interesting variants of Mills' procedure can be seen in~\cite{El}.

There are also some prime-representing constants that allow us to find all the primes; 
here we show two examples that can be found in \cite[\S\,22.3, p.~345]{HaWr}.
The first example takes $\beta = \sum_{k=1}^{\infty} p_k/10^{2^{k}}$, 
% where $\{p_k\}$ is the sequence of all the prime numbers; 
where $p_k$ is the $k$th prime number; 
then we have
\begin{equation}
\label{eq:pnHW1}
  p_n = \lfloor {10^{2^{n}} \beta} \rfloor - 10^{2^{n-1}} \lfloor {10^{2^{n-1}} \beta} \rfloor.
\end{equation}
For the second example, let us assume that we have previously proved that, for a positive integer $r$, the primes satisfy $p_n \le r^n$ for any $n$; this is true for $r=2$ due to Bertrand's postulate, but yet more elementary for $r=4$, as we can see in \cite[Theorem~20, p.~17]{HaWr}.
Then the number $\beta = \sum_{k=1}^\infty p_k/r^{k^2}$ gives the following method to generate the primes:
\begin{equation}
\label{eq:pnHW2}
  p_n = \lfloor {r^{n^2}\beta} \rfloor - r^{2n-1} \lfloor {r^{(n-1)^2}\beta} \rfloor.
\end{equation}
In both cases, checking the expressions \eqref{eq:pnHW1} and~\eqref{eq:pnHW2} is easy; in particular, the first one is a simple manipulation of the decimal expansion of $\beta = \sum_{k=1}^\infty p_k/10^{2^{k}}$ and has the primes embedded in it.

It may seem like a dirty trick to use the primes to define $\beta$ and then manipulate $\beta$ to recover the primes but, actually, the same idea appears in Mills' and Wright's procedures. For instance, Wright starts by taking the prime $W_1 = 3$; then, using Bertrand's postulate, there exists a prime $W_2$ between $2^{W_1}$ and $2 \cdot 2^{W_1}$ (we can take $W_2=11$ or $W_2=13$), and the method continues, always taking a prime $W_{n+1}$ between $2^{W_n}$ and $2 \cdot 2^{W_n}$. Then the primes $W_n$ are used to define Wright's constant $\alpha$, and the manipulation of $\alpha$ recovers the primes. Something similar happens in the case of Mills' constant $\theta$ (with an additional difficulty due to the fact that the constant $K$ in Ingham's theorem is not made explicit). Actually, this is already remarked in \cite[\S\,22.3, p.~345]{HaWr}: ``Any one of these formulae (or any similar one) would attain a different status if the exact value [of the prime-representing constant] which occurs in it could be expressed independently of the primes. There seems no likelihood of this, but it cannot be ruled out as entirely impossible.''

In 2019, a group of students from Buenos Aires University discovered another nice prime-representing constant; see Fridman et al.~\cite{FGGGT-BA}. 
They took
\[
  \lambda = \sum_{k=1}^{\infty} \frac{p_{k}-1}{\prod_{i=1}^{k-1} p_{i}},
\]
a series whose convergence can be proved using Bertrand's postulate. It is not difficult to check that $\lambda = 2.920050977316\dots$. 
Starting with the ``Buenos Aires constant'' $\lambda$, we recursively define $\lambda_1 = \lambda$ and 
$\lambda_n = \lfloor\lambda_{n-1}\rfloor (\lambda_{n-1} - \lfloor\lambda_{n-1}\rfloor + 1)$,
and then we have $\lfloor\lambda_n\rfloor = p_n$, for $n= 1,2,\dots$. Moreover, it can be seen that $\lambda$ is an irrational number (as we have already remarked, this is not known for Mills' constant).

Just as we have wondered about the irrationality of $\lambda$, we may be interested in knowing whether it is an algebraic or a transcendental number. Let us recall that a real number is algebraic of degree $n$ if it is the root of a nonzero irreducible polynomial in $\mathbb{Z}[x]$ of degree~$n$ (in particular, the rational numbers are the algebraic numbers of degree~$1$), and a number is said to be transcendental if it is not algebraic of degree $n$ for any positive integer~$n$.

It is not known if $\lambda$ or any other of the above-mentioned prime-representing constants are algebraic or transcendental numbers. Is it possible to find a prime-representing constant that is a transcendental number? 

Only a few classes of transcendental numbers are known, in part because it can be extremely difficult to show that a given number is transcendental. In 1844, Liouville gave the first construction of a transcendental number using the idea of very fast rational approximation, a property shared by only a very thin class of transcendental numbers (known as Liouville numbers). The work of Roth in 1955 allows one to identify a wider class of transcendental numbers, but again relying on very fast rational approximation (although not necessarily as fast as in the case of Liouville numbers). We will recall both Liouville's theorem and Roth's theorem in Section~\ref{sec:theorems} of this article.

The idea for finding a transcendental prime-representing constant is to define it by means of a series, as in the construction of the numbers $\beta$ used in \eqref{eq:pnHW1} and \eqref{eq:pnHW2}, but in such a way that the series converges to a transcendental number. This happens when the series converges fast enough to allow an approximation by rationals that, according to Liouville's or Roth's theorems, generates a transcendental number. Liouville's theorem is much more elementary than Roth's, while, on the other hand, Roth's theorem is more powerful. Later we are going to give two prime-representing constants that are transcendental numbers. For the first one, its transcendence will be proved using Liouville's theorem; to prove the transcendence of the second constant, we will use Roth's theorem.

Of course, this does not imply that the constants $\beta$ used in \eqref{eq:pnHW1} and \eqref{eq:pnHW2} are algebraic numbers, only that the series that define them are not useful in proving transcendence using Liouville's or Roth's theorems. 
It is possible that these $\beta$ are transcendental numbers (actually, we think that this is likely, because, in the sense of cardinality, almost all real numbers are transcendental), although different arguments would be necessary to prove this, and finding them does not seem to be an easy task. 
But, actually, this already happens with $e = \sum_{k=0}^\infty 1/k!$, which is a transcendental number (proved by Hermite in 1873), although the series is not useful in proving transcendence.

Thus, the main theorems of this article are the following.

\begin{theorem}
\label{thm:TL}
Let $a \ge 2$ be an integer, let $\{p_k\}$ be the sequence of prime numbers, and define
\[
  \mu = \sum_{k=1}^{\infty} \frac{p_k}{a^{k+k!}}.
\]
Then $\mu$ is a transcendental number and it generates the primes by means of $p_1 = \lfloor a^2 \mu \rfloor$ and
\begin{equation}
\label{eq:TL}
  p_n = \lfloor a^{n+n!} \mu \rfloor - a^{1 + n! - (n-1)!} \lfloor a^{(n-1)+(n-1)!} \mu \rfloor
  \quad \text{for } n=2,3,\dots.
\end{equation}
\end{theorem}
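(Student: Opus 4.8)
The plan is to handle the two assertions separately: the prime-generating identity \eqref{eq:TL} is an exercise in the arithmetic of the defining series, while the transcendence of $\mu$ is the substance of the theorem and will follow from Liouville's theorem applied to the partial sums of that series. The single object underlying both parts is
\[
  S_n = \sum_{k=1}^{n} p_k\, a^{(n+n!)-(k+k!)},
\]
which is an integer (every exponent is $\ge 0$ since $k+k!$ increases with $k$), together with the observation that $S_n/a^{n+n!}$ is exactly the $n$th partial sum of the series defining $\mu$.

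First I would prove that $\lfloor a^{n+n!}\mu\rfloor = S_n$. Writing $a^{n+n!}\mu = S_n + T_n$ with
\[
  T_n = \sum_{k=n+1}^{\infty} \frac{p_k}{a^{(k+k!)-(n+n!)}},
\]
it suffices to check $0 \le T_n < 1$. The lower bound is clear, and for the upper bound I would use the elementary estimate $p_k \le 2^k \le a^k$ (a consequence of Bertrand's postulate, already invoked in the paper). Since the exponents $(k+k!)-(n+n!)$ grow factorially, the series $T_n$ is dominated by its first term, and a short computation gives $T_n<1$ for every $n\ge 1$, the tightest case $n=1$ being verified directly from $T_1 = 3/a^2 + 5/a^7 + \cdots < 1$. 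Granting $\lfloor a^{n+n!}\mu\rfloor = S_n$, the formula \eqref{eq:TL} is pure bookkeeping: multiplying $S_{n-1}$ by $a^{1+n!-(n-1)!}$ shifts each of its exponents to match those of $S_n$, so that $S_n - a^{1+n!-(n-1)!}S_{n-1}$ telescopes to the single surviving term $p_n\, a^{0}=p_n$; the case $n=1$ is the separate identity $\lfloor a^2\mu\rfloor = S_1 = p_1$.

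For transcendence I would take the rationals $\mu_N = S_N/q_N$ with $q_N = a^{N+N!}>1$ as my approximations. The same estimate as above gives
\[
  0 < \mu - \mu_N = \sum_{k=N+1}^{\infty}\frac{p_k}{a^{k+k!}} \le \sum_{k=N+1}^{\infty} a^{-k!} < 2\,a^{-(N+1)!},
\]
where the positivity comes from the fact that the tail is a sum of positive terms. Comparing with $q_N^{-m}=a^{-m(N+N!)}$ for a fixed positive integer $m$ and using $(N+1)! = (N+1)N!$, the exponent gap $m(N+N!)-(N+1)! = mN + N!\bigl(m-N-1\bigr)$ tends to $-\infty$ as $N\to\infty$, so for all large $N$ we have $0<|\mu-\mu_N|<q_N^{-m}$. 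Thus $\mu$ admits, for every exponent $m$, a rational approximation faster than $q^{-m}$; by Liouville's theorem an algebraic number of degree $d$ cannot be approximated to order better than $q^{-d}$, so $\mu$ can be algebraic of no degree and is therefore transcendental (indeed, it is a Liouville number).

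The only genuinely delicate points are bookkeeping rather than ideas. One is the tightness of the tail bound at $n=1$ in the prime-generating part, where the crude estimate $p_k\le a^k$ barely fails (it yields $\ge 1$) and the first term must be handled by hand using $p_2=3<a^2$. The other is making sure the transcendence conclusion covers the degree-one (rational) case as well; this is automatic once the conclusion is phrased in terms of Liouville numbers, since a rational number is never approximable faster than $q^{-1}$. The conceptual heart — that placing $k!$ in the exponent makes the convergence of the series outrun any fixed power of the denominator $q_N$ — is exactly the mechanism Liouville's construction requires, so once the approximations $\mu_N$ are identified, the transcendence falls out.
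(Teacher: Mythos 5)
Your proof is correct, and its engine --- the partial sums of the defining series used as rational approximations and fed into Liouville's theorem --- is the same as the paper's. Still, the two arguments differ in ways worth recording. For transcendence, the paper fixes a putative degree $n\ge 2$, re-indexes the partial sums as $\mu_j=\sum_{k=1}^{n+j-1}p_k a^{-(k+k!)}$ with denominators $s_j=a^{(n+j-1)+(n+j-1)!}$, proves $\left|\mu-r_j/s_j\right|\le K_{a,n}/s_j^{n+1}$, and applies Liouville with $\varepsilon=1$ separately for each degree $n\ge 2$, disposing of irrationality separately by noting that the base-$a$ expansion of $\mu$ is neither finite nor periodic. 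You instead prove the single stronger statement that the approximation beats $q_N^{-m}$ for every fixed $m$ --- that $\mu$ is a Liouville number --- which handles all degrees at once, including the rational case, and avoids the degree-by-degree bookkeeping. For the prime-generating formula, the paper invokes its general Lemma with $f(k)=a^{k+k!}$, whereas you verify the floor identity directly by the same telescoping-plus-tail-bound computation that proves the Lemma. Here your extra care pays off: the Lemma's hypothesis $f(k)\ge 2^{k+1}f(k-1)$ fails for $a=2$ at $k=2$ (it would require $a^2\ge 8$), so strictly speaking the paper's appeal to the Lemma does not cover the case $a=2$, even though the conclusion remains true there. Your observation that the crude bound $p_k\le a^k$ only yields a tail $\ge 1$ when $n=1$, so that the first term must be estimated by hand via $p_2=3<a^2$, is exactly what is needed to close that gap; on this edge case your proof is more careful than the paper's.
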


\begin{theorem}
\label{thm:TR}
Let $a \ge 2$ and $m \ge 3$ be two integers, let $\{p_k\}$ the sequence of prime numbers, and define
\[
  \nu = \sum_{k=1}^{\infty} \frac{p_k}{a^{k+m^k}}.
\]
Then $\nu$ is a transcendental number and it generates the primes by means of 
$p_1 = \lfloor a^{1+m} \nu \rfloor$ and
\begin{equation}
\label{eq:TR}
  p_n = \lfloor a^{n+m^{n}} \nu \rfloor - a^{1+m^n - m^{n-1}} \lfloor a^{(n-1)+m^{n-1}} \nu \rfloor 
  \quad \text{for } n=2,3,\dots.
\end{equation}
\end{theorem}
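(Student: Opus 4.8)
The plan is to prove the two assertions separately: first the combinatorial identity \eqref{eq:TR} that recovers the primes, which is elementary, and then the transcendence of $\nu$, which is where Roth's theorem enters. Throughout I will use Bertrand's postulate in the form $p_k \le 2^k$, which guarantees both the convergence of the defining series (since $m \ge 3$ makes $a^{k+m^k}$ grow far faster than $2^k$) and the tail estimates below.

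For the prime-generating formula, I would fix $n$ and examine $a^{n+m^n}\nu = \sum_{k=1}^\infty p_k\, a^{n+m^n-k-m^k}$. The exponent $n+m^n-k-m^k$ is a nonnegative integer for $1 \le k \le n$ (vanishing exactly at $k=n$) and is negative for $k \ge n+1$. Hence the sum splits as $I_n + F_n$, where $I_n = \sum_{k=1}^n p_k\, a^{n+m^n-k-m^k} \in \mathbb{Z}$ and $F_n = \sum_{k=n+1}^\infty p_k\, a^{n+m^n-k-m^k}$. The key point is that $0 \le F_n < 1$: bounding $p_k \le 2^k \le a^k$ turns $F_n$ into a rapidly convergent series whose very first term is already far below $1$ (here $m-1 \ge 2$ leaves ample room). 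Therefore $\lfloor a^{n+m^n}\nu\rfloor = I_n$, and likewise $\lfloor a^{(n-1)+m^{n-1}}\nu\rfloor = I_{n-1}$. A direct computation shows $a^{1+m^n-m^{n-1}}\,I_{n-1} = \sum_{k=1}^{n-1} p_k\, a^{n+m^n-k-m^k}$, so subtracting leaves exactly the $k=n$ term $p_n\, a^0 = p_n$, which is \eqref{eq:TR}; the case $n=1$ is the same bookkeeping.

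For transcendence I would use the partial sums as rational approximations. Set $Q_N = a^{N+m^N}$ and $P_N = \sum_{k=1}^N p_k\, a^{N+m^N-k-m^k}$, so that $P_N/Q_N = \sum_{k=1}^N p_k/a^{k+m^k}$ and
\[
  0 < \nu - \frac{P_N}{Q_N} = \sum_{k=N+1}^\infty \frac{p_k}{a^{k+m^k}} \le \sum_{k=N+1}^\infty \frac{1}{a^{m^k}} \le \frac{2}{a^{m^{N+1}}},
\]
again using $p_k \le 2^k \le a^k$ and the super-exponential growth of $m^k$. Choosing $\varepsilon = 1/2$, I would check that $\nu - P_N/Q_N < Q_N^{-(2+\varepsilon)}$ reduces to the inequality $(2+\varepsilon)(N+m^N) + 1 < m^{N+1}$, which, because $m^{N+1} = m\cdot m^N \ge 3 m^N$ while $(2+\varepsilon)m^N = \tfrac52 m^N$, holds for every sufficiently large $N$. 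Since $P_N/Q_N$ has denominator at most $Q_N$ in lowest terms, this produces infinitely many distinct rationals $p/q$ with $|\nu - p/q| < q^{-(2+\varepsilon)}$. If $\nu$ were rational, say $s/t$, then each nonzero $|\nu - p/q|$ would be at least $1/(tq)$, contradicting the bound $q^{-(2+\varepsilon)}$ for large $q$; and if $\nu$ were an algebraic irrational, Roth's theorem would permit only finitely many such approximations. Both cases are impossible, so $\nu$ is transcendental.

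The substance of the argument is concentrated in the single exponent inequality of the preceding paragraph, and it is exactly here that the hypothesis $m \ge 3$ is indispensable: the effective approximation exponent is $\lim_N m^{N+1}/(N+m^N) = m$, so one needs $m > 2$ for Roth's theorem to apply, whereas $m = 2$ would land precisely on the forbidden exponent $2$. Everything else—the convergence, the floor identity, and the tail estimate—is routine bookkeeping with the bound $p_k \le 2^k$. (Theorem~\ref{thm:TL} follows the same two-part scheme, with $k!$ in place of $m^k$; there the approximation exponent tends to infinity, so the elementary Liouville's theorem already suffices in place of Roth's.)
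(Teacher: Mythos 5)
Your proposal is correct and follows essentially the same route as the paper: the partial sums $P_N/Q_N$ with $Q_N = a^{N+m^N}$ serve as rational approximations good enough for Roth's theorem, and the prime-generating identity \eqref{eq:TR} comes from the same integer-plus-small-tail splitting of $a^{n+m^n}\nu$ (which the paper packages into a standalone lemma with $f(k)=a^{k+m^k}$ rather than proving inline). The only cosmetic differences are your choice $\varepsilon = 1/2$ in place of the paper's $\varepsilon = 1/4$, and your exclusion of rationality via the approximation bound $|\nu - p/q| \ge 1/(tq)$ instead of the paper's observation that the base-$a$ expansion of $\nu$ is neither finite nor periodic.
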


The structure of the article is as follows. In Section~\ref{sec:lemma}, we give a lemma that will be used to prove that a constant defined as a series is a prime-representing constant. Finally, we prove Theorems~\ref{thm:TL} and~\ref{thm:TR} in Section~\ref{sec:theorems}.

%-----------
\section{A preliminary lemma}
\label{sec:lemma}
%-----------

Here, we show a general scheme to find prime-rep\-re\-sent\-ing constants defined by means of series. In the proof, we only need to use Bertrand's postulate, a consequence of which is that the $k$th prime is at most~$2^k$. It would not be very difficult to weaken the hypotheses in the lemma and give a very similar proof, but the following statement is enough for the purposes of this article. 

\begin{lemma}
Let $f : \mathbb{N} \to \mathbb{N}$ be a function that satisfies $f(k) \ge 2^{k+1} f(k-1)$ and \mbox{$f(k-1) \mid f(k)$\,} for $k \ge 2$, and define
\[
  S = \sum_{k=1}^{\infty} \frac{p_k}{f(k)}
\]
with $\{p_k\}$ the sequence of prime numbers.
Then $p_1= \lfloor f(1)S \rfloor$ and
\[
  p_n = \lfloor f(n)S \rfloor - \frac{f(n)}{f(n-1)} \lfloor f(n-1)S \rfloor
  \quad \text{for } n=2,3,\dots.
\]
\end{lemma}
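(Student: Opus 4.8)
The plan is to evaluate $f(n)S$ and show that its integer part is exactly the truncated sum $\sum_{k=1}^{n} f(n)p_k/f(k)$. First I would note that the divisibility hypothesis $f(k-1)\mid f(k)$ chains to give $f(k)\mid f(n)$ whenever $k\le n$; consequently each term $f(n)p_k/f(k)$ with $k\le n$ is a positive integer, so that $A_n := \sum_{k=1}^{n} f(n)p_k/f(k)$ is an integer. Writing $f(n)S = A_n + \varepsilon_n$ with the tail $\varepsilon_n := \sum_{k=n+1}^{\infty} f(n)p_k/f(k)$, the entire statement reduces to establishing $0 \le \varepsilon_n < 1$, because then $\lfloor f(n)S \rfloor = A_n$.

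The heart of the matter, and the step in which the precise factor $2^{k+1}$ of the growth hypothesis is needed, is the bound $\varepsilon_n < 1$. Here I would invoke Bertrand's postulate in its two useful forms, $p_k \le 2^k$ and $p_k < 2p_{k-1}$. The leading tail term satisfies $f(n)p_{n+1}/f(n+1) \le 2^{n+1}f(n)/(2^{n+2}f(n)) = 1/2$, while for every $k \ge 2$ the ratio of consecutive terms obeys $(p_k/p_{k-1})(f(k-1)/f(k)) < 2\cdot 2^{-(k+1)} = 2^{-k} \le 1/4$. Since $\varepsilon_n$ starts at $k = n+1 \ge 2$ and each successive term is at most $1/4$ of the one before, summing the geometric majorant gives $\varepsilon_n \le \tfrac12\cdot\frac{1}{1-1/4} = \tfrac23 < 1$, as required; nonnegativity is immediate. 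This pins down $\lfloor f(n)S\rfloor = A_n$ for every $n$.

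The remaining identities then follow by routine algebra. For $n=1$ we get $A_1 = f(1)p_1/f(1) = p_1$, which is the first claim. For $n \ge 2$, multiplying $\lfloor f(n-1)S\rfloor = A_{n-1}$ through by $f(n)/f(n-1)$ converts it into $\sum_{k=1}^{n-1} f(n)p_k/f(k)$, which is precisely $A_n$ with its final term $f(n)p_n/f(n) = p_n$ deleted; subtracting therefore isolates $p_n$, giving the displayed formula. The only genuine obstacle is the tail estimate, where the hypotheses are calibrated so that the bound lands just below~$1$.
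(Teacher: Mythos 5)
Your proof is correct and follows essentially the same approach as the paper: decompose $f(n)S$ into the integer $A_n = \sum_{k=1}^{n} \frac{f(n)}{f(k)}\,p_k$ (via the divisibility hypothesis) plus a tail, show the tail lies in $[0,1)$ using Bertrand's postulate together with the growth hypothesis, and finish with the same telescoping algebra. The only cosmetic difference is in bookkeeping for the tail: you bound consecutive-term ratios by $1/4$ (using $p_k < 2p_{k-1}$) and sum a geometric series to get $2/3$, whereas the paper bounds each tail term individually via $p_k \le 2^k$; both land strictly below $1$.
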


\begin{proof}
Take
\[
  f(n)S = \sum_{k=1}^{n} \frac{f(n)}{f(k)} \,p_k + \sum_{k=n+1}^{\infty} \frac{f(n)}{f(k)} \,p_k.
\]
Since $f(k-1) \mid f(k)$, it follows that the first sum gives an integer;
let us analyze the second sum. 
Recall that, as a consequence of Bertrand's postulate, we have $p_k \le 2^k$.
For the first summand $k = n+1$, we have
\[
  \frac{f(n)}{f(k)} \,p_k = \frac{f(k-1)}{f(k)} \,p_k \le 2^{-k-1} \cdot 2^k = 2^{-1},
\]
and, for $k > n+1$,
\begin{align*}
  \frac{f(n)}{f(k)} \,p_k 
  &= \frac{f(n)}{f(n+1)} \frac{f(n+1)}{f(n+2)} \cdots \frac{f(k-2)}{f(k-1)} \frac{f(k-1)}{f(k)} \,p_k
  \\
  &\le 2^{-n-2} \cdot 2^{-n-3} \cdots 2^{-k-2} \cdot 2^{-k-1} \cdot 2^k
%  = 2^{-(n+2)-(n+3)-\cdots -(k+2) - 1} 
  \le 2^{-k-3},
\end{align*}
so
\[
  0 \le \sum_{k=n+1}^{\infty} \frac{f(n)}{f(k)} \,p_k 
  \le \frac{1}{2} + \sum_{k=n+2}^{\infty} \frac{1}{2^{k+3}} = \frac{1}{2} + \frac{1}{2^{n+4}} < 1.
\]
As a consequence,
\[
  \lfloor f(n)S \rfloor = \sum_{k=1}^{n} \frac{f(n)}{f(k)} \,p_k.
\]
Then, for $n>1$ we have
\begin{align*}
  &\lfloor f(n)S \rfloor - \frac{f(n)}{f(n-1)} \lfloor f(n-1)S \rfloor
  \\
  &\qquad\qquad
  = f(n) \sum_{k=1}^{n} \frac{p_k}{f(k)} - \frac{f(n)}{f(n-1)} f(n-1) \sum_{k=1}^{n-1} \frac{p_k}{f(k)}
  = p_n,
%\qedhere
\end{align*}
and the case $n=1$ is trivial.
\end{proof}

The smallest $f$ that satisfies the hypotheses of the lemma is $f(1)=1$ and $f(k) = 2^{3+4+\cdots+(k+1)}$ for $k > 1$. Other examples (that correspond to \eqref{eq:pnHW2} and \eqref{eq:pnHW1}, respectively) are $f(k) = r^{k^2}$ for any $r \ge 2$, and $f(k) = 10^{2^k}$.

Here we will use functions $f$ that grow still faster, such as
\[
  f(k) = r^k a^{k!} \qquad\text{or}\qquad f(k) = r^k a^{m^k}
\]
for $r, a, m \ge 2$, and that also satisfy the hypothesis of the lemma, but we will restrict some of the parameters to prove transcendence (in particular, we will use $a=r$ and we will take $m \ge 3$).

%-----------
\section{Proofs of the theorems}
\label{sec:theorems}
%-----------

For the sake of completeness, let us reproduce Liouville and Roth's theorems on approximation of rationals.  

Liouville's theorem was proved in 1844 and was used to prove that the number $\sum_{k=1}^{\infty} 1/10^{k!}$ (or $\sum_{k=1}^{\infty} 1/a^{k!}$ for $a\ge2$) that is not rational because its decimal expansion (respectively, its expansion in base $a$) is neither finite nor periodic, cannot be algebraic of degree $n$ for any $n\ge2$. Thus it is a transcendental number. Prior to this result, the existence of transcendental numbers was uncertain.

\begin{liouville}[1844]
Let $\xi$ be an algebraic number of degree $n \ge 2$. Then, for any fixed arbitrary constants $\varepsilon > 0$ and $K$, there are only finitely many rational approximations~$p/q$ (with $q>0$) for which
\[
  \left| \xi - \frac{p}{q} \right| < \frac{K}{q^{n+\varepsilon}}.
\]
\end{liouville}

Roth's theorem was proved in 1955 after some previous results on the order of approximation by rationals for algebraic numbers were established by Thue and Siegel; for this reason, it is sometimes known as the Thue--Siegel--Roth theorem. It can be used to prove that some series such as $\sum_{k=1}^{\infty} {1}/{a^{m^k}}$ for $a\ge2$ and $m\ge3$ converge to a transcendental number.

\begin{roth}[1955, \cite{Roth}]
Let $\xi$ be an irrational algebraic number. Then, for any arbitrary fixed $\varepsilon > 0$, there are only finitely many rational approximations~$p/q$ (with $q>0$) for which
\[
  \left| \xi - \frac{p}{q} \right| < \frac{1}{q^{2+\varepsilon}}.
\]
\end{roth}

Now we have all that we need to prove our theorems.

\begin{proof}[Proof of Theorem~\ref{thm:TL}]
It is clear that, expressed in base $a$, the digital expansion of $\mu$ is not finite or periodic, so it is not a rational number. Let us see that $\mu$ cannot be algebraic of degree $n$ for any $n \ge 2$.

For fixed $n$, let us denote the partial sums of $\mu$ by
\[
  \mu_j = \sum_{k=1}^{n+j-1} \frac{p_k}{a^k a^{k!}} = \frac{r_j}{s_j},
\]
% THE VALUE $r_j$ IS NOT IMPORTANT
where 
% $r_j = a^{(n+j-1)+(n+j-1)!} \big( p_1 a^{-1-1!} + p_2 a^{-2-2!} 
% + \cdots + p_{n+j-1} a^{-(n+j-1)-(n+j-1)!} \big)$
% and 
$s_j = a^{(n+j-1)+(n+j-1)!}$. 
By Bertand's postulate, $p_k \le 2^k \le a^k$, so we can write 
\begin{align*}
  \left| \mu - \frac{r_j}{s_j} \right|
  &= \sum_{k=n+j}^{\infty} \frac{p_k}{a^k a^{k!}} 
  \le \sum_{k=n+j}^{\infty} \frac{1}{a^{k!}} \le \sum_{k=(n+j)!}^{\infty} \frac{1}{a^{k}} \\
  &= \frac{1}{a^{(n+j)!}} \bigg( 1 + \frac{1}{a} + \frac{1}{a^2} + \cdots \bigg) 
%  = \frac{1}{a^{(n+j)!}} \frac{1}{1-1/a} 
  = \frac{1}{a^{(n+j)!}} \frac{a}{a-1}.
\end{align*}
Moreover, $s_j^{n+j} = a^{(n+j-1)(n+j)+(n+j)!}$, so
\begin{align*}
  \left| \mu - \frac{r_j}{s_j} \right| 
  &\le \frac{a^{(n+j-1)(n+j)}}{s_j^{n+j}} \frac{a}{a-1}
  = \frac{a^{(n+j-1)(n+j)+1}}{(a-1)s_j^{j-1}} \frac{1}{s_j^{n+1}} \\
  &= \frac{a^{(n+j-1)(n+j)+1}}{(a-1)a^{(j-1)(n+j-1)+(j-1)(n+j-1)!}} \frac{1}{s_j^{n+1}}
  \le \frac{K_{a,n}}{s_j^{n+1}} 
\end{align*}
for a constant $K_{a,n}$ that does not depend on~$j$.
Thus, we have infinitely many rational approximations $r_j/s_j$ and then Liouville's theorem with $\varepsilon=1$ shows that $\mu$ cannot be an algebraic number of orden~$n$.

Now, the prime representation \eqref{eq:TL} is a direct consequence of the lemma.
\end{proof}

\begin{proof}[Proof of Theorem~\ref{thm:TR}]
Expressed in base $a$, the digital expansion of $\nu$ is not finite or periodic, so it is not a rational number.

To prove transcendence, let us denote the partial sums of $\nu$ by
\[
  \nu_j = \sum_{k=1}^{j} \frac{p_k}{a^k a^{m^k}} = \frac{r_j}{s_j},
\]
% THE VALUE $r_j$ IS NOT IMPORTANT
where 
% $r_j = a^{j+m^j} \big( p_1 a^{-1-m} + p_2 a^{-2-m^2} + \cdots + p_j a^{-j-m^j} \big)$
% and 
$s_j = a^{j+m^j}$. 
By Bertand's postulate, $p_k \le 2^k \le a^k$, so we can write
%% $k=j+i$ 
\begin{align*}
  \left| \nu - \frac{r_j}{s_j} \right|
  &= \sum_{k=j+1}^{\infty} p_k a^{-k} a^{-m^k} 
  \le \sum_{k=j+1}^{\infty} a^{-m^k} 
  = \sum_{i=1}^{\infty} \big(a^{m^j}\big)^{-m^i} \\
  &= \sum_{i=1}^{\infty} \big(s_j/a^j\big)^{-m^i} 
  \le \sum_{i=1}^{\infty} \big(s_j/a^j\big)^{-mi} 
%  = \frac{(s_j/a^j)^{-m}}{1-(s_j/a^j)^{-m}}
  = \frac{1}{(s_j/a^j)^m-1}.
\end{align*}
Moreover, observe that $(s_j/a^j)^m \ge s_j^{m-1/2}$ if and only if $s_j \ge a^{2jm}$, and this inequality holds for $j \ge 2$ because $j+m^j \ge 2jm$ (recall that $m\ge3$). Then, for $j \ge 2$, 
\[
  \left| \nu - \frac{r_j}{s_j} \right| 
  \le \frac{1}{s_j^{m-1/2}-1} 
  \le \frac{1}{s_j^{m-3/4}} 
  \le \frac{1}{s_j^{2+1/4}},
\]
where in the last step we have used $m \ge 3$. 
By applying Roth's theorem with $\varepsilon=1/4$, this implies that $\nu$ is a transcendental number.

Finally, to find \eqref{eq:TR} it is enough to apply the lemma.
\end{proof}

%-----------
\begin{paragraph}{Acknowledgment.}
The research of the author is partially supported 
by grant PGC2018-096504-B-C32 of MINECO/FEDER
 (Spanish Government).
\end{paragraph}
%-----------

%-----------

%-----------

\bigskip

\noindent
Juan Luis Varona
\\*
\textit{Departamento de Matem\'aticas y Computaci\'on, Universidad de La Rioja,
\\*
26006 Logro\~no, Spain}
\\*
\textit{jvarona@unirioja.es}
%\href{https://www.unirioja.es/cu/jvarona/}{www.unirioja.es/cu/jvarona/}

%---------------
\end{document}